\newtheorem{thm}{Theorem}[section]
\newtheorem{defn}[thm]{Definition}
\newtheorem{rem}[thm]{Remark}
\newtheorem{exm}[thm]{Example}
\begin{document}
\title{Semiopen and semiclosed sets in fuzzy soft topological spaces}
\author{J. Mahanta}

\address{Department of Mathematics\\
         NIT, Silchar\\
         Assam, 788 010, India.\\
         mahanta.juthika@gmail.com}


\thanks{{\footnotesize Received: 20XXX; Accepted: 20XX}
\newline\indent{\footnotesize {\it Key words and phrases:} Fuzzy soft topological space, semiopen fuzzy soft set, semiclosed fuzzy soft sets.}
}

\begin{abstract}
In this paper, we introduce semiopen and semiclosed fuzzy soft sets in fuzzy soft topological spaces.  Various properties of these sets are studied alongwith some characterizations. Further, we generalize the structures like interior and closure via semiopen and semiclosed fuzzy soft sets and study their various properties.

\end{abstract}
\maketitle
\vspace{.3 cm}

\section{\bf Introduction}
Many Mathematical concepts can be represented by the notion of set theory, which dichotomize the situation into the conditions: either \textquotedblleft yes\textquotedblright~ or \textquotedblleft no\textquotedblright. Till 1965, Mathematicians were concerned only about \textquotedblleft well-defined\textquotedblright~ things, and smartly avoided any other possibility which are more realistic in nature. For instance the set of tall persons in a room, the set of hot days in a year etc. In the year 1965, Prof. L.A. Zadeh \cite{S} introduced fuzzy set to accommodate real life situations by giving partial membership to each element of a situation under consideration.    

Keeping in view that fuzzy set theory lacks the parametrization tool, Molodtsov \cite{S1} introduced soft set as another mathematical framework to  deal with real life situations. 
Then comes another generalization of sets, namely fuzzy soft set, which is a hybridization of fuzzy sets and soft sets, in which soft set is defined over fuzzy set. Similar generalization have also spread to topological space. The notion of topological space is defined on crisp sets and hence is affected by different generalizations of crisp sets like fuzzy sets and soft sets. C.L. Chang \cite{CA} introduced fuzzy topological space in 1968 and subsequently \c{C}a\v{g}man {\textit {et al}}. \cite{S4} and Shabir {\textit {et al}}. \cite{S2} introduced soft topological space independently in 2011. In the same year B. Tanay {\textit {et al}}. \cite{S5} introduced fuzzy soft topological spaces and studied neighborhood and interior of a fuzzy soft set and then used these to characterize fuzzy soft open sets. Recently Roy {\textit {et al}}. \cite{S7} have obtained different conditions for a subfamily of fuzzy soft sets to be a fuzzy soft basis or fuzzy soft subbasis. Levine \cite{NL} introduced the concepts of semi-open sets and semicontinuous mappings in topological spaces and were applied in the field of Digital Topology \cite{AR}. Azad \cite{AZ} initiated the study of these sets in fuzzy setting and in \cite{PKD}, authors carried the study in soft topological spaces.\\

This paper aims to generalize open and closed sets in fuzzy soft topological spaces as semiopen and semiclosed fuzzy soft sets. Then various set theoretic properties related to these generalized sets are to be studied. Further, it is intended to generalize the structures like interior and closure via semiopen and semiclosed fuzzy soft sets and study their properties.

It is presumed that the basic concepts like fuzzy sets, soft sets and fuzzy soft sets etc. are known to the readers. However below are some definitions and results required in the sequel.

\begin{defn} 
Let $f_E $ be a fuzzy soft set, $\mathcal{FS}(f_E)$ be the set of all fuzzy soft subsets of $f_E $, $\tau$ be a subfamily of $\mathcal{FS}(f_E)$ and $A, B, C \subseteq E$. Then $\tau$ is called a fuzzy soft topology on $f_E$ if the following conditions are satisfied
\begin{enumerate}[(i)]
\item $\overset{\sim}{\Phi}_E, f_E$ belongs to $\tau$;
\item $h_A, k_B \in \tau \Rightarrow h_A\overset{\sim}{\bigcap} k_B \in \tau $;
\item $\{(g_C)_\lambda~|~ \lambda \in \Lambda\} \subset \tau \Rightarrow \overset{\sim}{\underset{\lambda \in \Lambda}{\bigcup}} (g_C)_\lambda \in \tau $.
\end{enumerate}
Then $(f_E, \tau)$ is called a fuzzy soft topological space. Members of $\tau$ are called fuzzy soft open sets and their complements are called fuzzy soft closed sets.
\end{defn}

\begin{defn} \cite{JM}
A fuzzy soft set $g_A$ is said to be a fuzzy soft point, denoted by $\huge{e}_{g_{\scriptscriptstyle A}}$, if for the element $e \in A, g(e) \neq \overset{\sim}{\Phi}$ and $g(e^{'}) = \overset{\sim}{\Phi}, \forall e^{'} \in A-\{e\}$.
\end{defn}

\begin{defn} \cite{JM}
A fuzzy soft point $e_{g_{\scriptscriptstyle A}}$ is said to be in a fuzzy soft set $h_A$, denoted by $e_{g_{\scriptscriptstyle A} } \overset{\sim}{\in} h_A$ if for the element $e \in A, g(e) \leq h(e)$.
\end{defn}

 


\section{Semiopen and semiclosed fuzzy soft sets}

Generalization of closed and open sets in topological spaces are of recent advances. Here, we introduce semiopen and semiclosed fuzzy soft sets and study various set theoretic properties related to these structures. The concepts of closure and interior are generalized via semiopen and semiclosed fuzzy soft sets.

\begin{defn}
In a fuzzy soft topological space $(f_E, \tau)$, a fuzzy soft set 

\begin{enumerate}
 \item $g_ A$ is said to be semiopen fuzzy soft set if $\exists$ an open fuzzy soft set $h_A$ such that $h_A \overset{\sim }{\subseteq} g_A \overset{\sim }{\subseteq} cl(h_A)$;
\item $p_ A$ is said to be semiclosed fuzzy soft set if $\exists$ a closed fuzzy soft set $k_A$ such that $int(k_A) \overset{\sim }{\subseteq} p_A \overset{\sim }{\subseteq} k_A$;
\end{enumerate}

\end{defn}

\begin{exm}
Let $U = \{h^1, h^2, h^3\}$ and $E= \{e_1, e_2, e_3\}$. Consider a fuzzy soft set $f_E = \{(e_1, \{h^1_{0.2}, h^2_{0.8}, h^3_{0.5}\}), (e_2, \{h^1_{0.8}, \\h^2_{0.1}, h^3_{1}\}), (e_3, \{h^1_{0.7}, h^2_{0.5}, h^3_{0.2}\})\}$ defined on $U$. Then the subfamily

$\tau = \{\overset{\sim}{\Phi}_E, f_E, \{(e_1, \{h^1_{0.2}, h^2_{0.4}, h^3_{0.1}\})\}, \\
~~~~~~~~~~\{(e_1, \{h^1_{0.1}, h^2_{0.5}, h^3_{0.5}\}), (e_2, \{h^1_{0.7}, h^2_{0}, h^3_{0.7}\}), (e_3, \{h^1_{0.6}, h^2_{0.1}, h^3_{0.1}\})\}\\
~~~~~~~~~~\{(e_1, \{h^1_{0.2}, h^2_{0.6}, h^3_{0.4}\}), (e_2, \{h^1_{0.1}, h^2_{0.1}, h^3_{0.9}\}), (e_3, \{h^1_{0.5}, h^2_{0.5}, h^3_{0.1}\})\}\\
 ~~~~~~~~~~\{(e_1, \{h^1_{0}, h^2_{0.8}, h^3_{0.5}\}), (e_2, \{h^1_{0.8}, h^2_{0}, h^3_{0.1}\}), (e_3, \{h^1_{0.4}, h^2_{0.3}, h^3_{0}\})\} \\
 ~~~~~~~~~~\{(e_1, \{h^1_{0.2}, h^2_{0.8}, h^3_{0.5}\}), (e_2, \{h^1_{0.8}, h^2_{0.1}, h^3_{0.9}\}), (e_3, \{h^1_{0.5}, h^2_{0.3}, h^3_{0.1}\})\}\\
 ~~~~~~~~~~\{(e_1, \{h^1_{0.1}, h^2_{0.8}, h^3_{0.5}\}), (e_2, \{h^1_{0.8}, h^2_{0}, h^3_{0.7}\}), (e_3, \{h^1_{0.6}, h^2_{0.3}, h^3_{0.1}\})\}\\
~~~~~~~~~~\{(e_1, \{h^1_{0.2}, h^2_{0.5}, h^3_{0.5}\}), (e_2, \{h^1_{0.7}, h^2_{0}, h^3_{0.7}\}), (e_3, \{h^1_{0.6}, h^2_{0.1}, h^3_{0.1}\})\}\\
~~~~~~~~~~\{(e_1, \{h^1_{0.2}, h^2_{0.8}, h^3_{0.5}\}), (e_2, \{h^1_{0.8}, h^2_{0}, h^3_{0.1}\}), (e_3, \{h^1_{0.4}, h^2_{0.3}, h^3_{0}\})\}\\
~~~~~~~~~~\{(e_1, \{h^1_{0.2}, h^2_{0.6}, h^3_{0.5}\}), (e_2, \{h^1_{0.7}, h^2_{0.1}, h^3_{0.9}\}), (e_3, \{h^1_{0.6}, h^2_{0.5}, h^3_{0.1}\})\}\}$\\
 
is a fuzzy soft topology on $f_E$ and $(f_E, \tau)$ is a fuzzy soft topological space.\\

Here $g_E = \{(e_1, \{h^1_{0.1}, h^2_{0.4}, h^3_{0.5}\}), (e_2, \{h^1_{0.1}, h^2_{0}, h^3_{0.7}\}), (e_3, \{h^1_{0.5}, h^2_{0.1}, h^3_{0.1}\})\}$ is a semiopen fuzzy soft set.
\end{exm}

\begin{rem}
Every open (closed) fuzzy soft set is a semiopen (semiclosed) fuzzy soft set but not conversely. 
\end{rem}

\begin{rem}
$\overset{\sim}{\Phi}_E$ and $f_E$ are always semiclosed and semiopen.
\end{rem}

\begin{rem}
Every clopen set is both semiclosed and semiopen.
\end{rem}

From now onwards, we shall denote the family of all semiopen fuzzy soft sets (semiclosed fuzzy soft sets) of a fuzzy soft topological space $(f_E, \tau)$ by $SOFSS(f_E) $ ($SCFSS(f_E)$).

\begin{thm}
Arbitrary union of semiopen fuzzy soft sets is a semiopen fuzzy soft set.
\end{thm}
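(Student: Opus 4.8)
The plan is to produce, for the union, a single fuzzy soft open set that sandwiches it in the sense demanded by the definition of a semiopen fuzzy soft set. Let $\{(g_A)_i : i \in \Lambda\}$ be an arbitrary family in $SOFSS(f_E)$. By the definition of semiopenness, for each $i \in \Lambda$ I may choose a fuzzy soft open set $h_i$ with $h_i \overset{\sim}{\subseteq} (g_A)_i \overset{\sim}{\subseteq} cl(h_i)$. I would then set $h = \overset{\sim}{\underset{i \in \Lambda}{\bigcup}} h_i$; by axiom (iii) of a fuzzy soft topology this $h$ belongs to $\tau$, i.e.\ is fuzzy soft open, and it will serve as the required witness for the union.

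It then remains to verify the two inclusions $h \overset{\sim}{\subseteq} \overset{\sim}{\underset{i \in \Lambda}{\bigcup}}(g_A)_i \overset{\sim}{\subseteq} cl(h)$. The first one is immediate: since $h_i \overset{\sim}{\subseteq} (g_A)_i$ for every $i$, taking unions on both sides preserves $\overset{\sim}{\subseteq}$, giving $h \overset{\sim}{\subseteq} \overset{\sim}{\bigcup}_{i}(g_A)_i$. For the second, I would invoke monotonicity of the fuzzy soft closure operator: from $h_i \overset{\sim}{\subseteq} h$ it follows that $cl(h_i) \overset{\sim}{\subseteq} cl(h)$, hence $(g_A)_i \overset{\sim}{\subseteq} cl(h_i) \overset{\sim}{\subseteq} cl(h)$ for each $i$; taking the union over $i$ yields $\overset{\sim}{\bigcup}_{i}(g_A)_i \overset{\sim}{\subseteq} cl(h)$. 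Combining the two inclusions, $h \overset{\sim}{\subseteq} \overset{\sim}{\bigcup}_{i}(g_A)_i \overset{\sim}{\subseteq} cl(h)$ with $h$ fuzzy soft open, so $\overset{\sim}{\bigcup}_{i}(g_A)_i$ is a semiopen fuzzy soft set.

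I do not anticipate any genuine obstacle here: the argument rests only on the topology axiom that arbitrary unions of fuzzy soft open sets are open and on the monotonicity of the closure operator with respect to $\overset{\sim}{\subseteq}$, both of which are routine. The single point worth keeping explicit is that the definition of semiopenness fixes the parameter set, so all the $(g_A)_i$ — and hence all the chosen $h_i$ — are fuzzy soft sets over the same $A$; this is precisely what makes the union $\overset{\sim}{\bigcup}_{i} h_i$ and the inclusion chain above well posed.
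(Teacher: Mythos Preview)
Your proposal is correct and follows essentially the same approach as the paper: choose an open witness $(h_A)_\lambda$ for each semiopen $(g_A)_\lambda$, take the union $\overset{\sim}{\bigcup}(h_A)_\lambda$ as the open witness for $\overset{\sim}{\bigcup}(g_A)_\lambda$, and verify the sandwich inclusions. Your write-up is in fact more detailed than the paper's, which asserts the chain $\overset{\sim}{\bigcup}(h_A)_\lambda \overset{\sim}{\subseteq} \overset{\sim}{\bigcup}(g_A)_\lambda \overset{\sim}{\subseteq} cl(\overset{\sim}{\bigcup}(h_A)_\lambda)$ without spelling out the monotonicity-of-closure step you make explicit.
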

\begin{proof}
Let $\{(g_A)_\lambda~|~ \lambda \in \Lambda\}$ be a collection of semiopen fuzzy soft sets of a fuzzy soft topological space $(f_E, \tau)$. Then $\exists$ an open fuzzy soft sets $(h_A)_\lambda$ such that $(h_A)_\lambda \overset{\sim }{\subseteq} (g_A)_\lambda \overset{\sim }{\subseteq} cl((h_A)_\lambda)$ for each $\lambda$; hence $\overset{\sim }{\bigcup}(h_A)_\lambda \overset{\sim }{\subseteq} \overset{\sim }{\bigcup}(g_A)_\lambda \overset{\sim }{\subseteq}  cl(\overset{\sim }{\bigcup}(h_A)_\lambda)$ and $\overset{\sim }{\bigcup}(h_A)_\lambda$ is open fuzzy soft set. So $\overset{\sim }{\bigcup}(g_A)_\lambda$ is a semiopen fuzzy soft set.
\end{proof}

\begin{rem}
Arbitrary intersection of semiclosed fuzzy soft sets is a semiclosed fuzzy soft set.
\end{rem}
\begin{thm}
If a semiopen fuzzy soft set $g_A$ is such that $g_A \overset{\sim }{\subseteq} k_A \overset{\sim }{\subseteq} cl(g_A) $, then $k_A$ is also semiopen.
\end{thm}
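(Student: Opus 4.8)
The plan is to exhibit an open fuzzy soft set witnessing the semiopenness of $k_A$, using the one that already witnesses semiopenness of $g_A$. Since $g_A$ is semiopen, there is an open fuzzy soft set $h_A$ with $h_A \overset{\sim}{\subseteq} g_A \overset{\sim}{\subseteq} cl(h_A)$. I claim the same $h_A$ works for $k_A$, i.e. that $h_A \overset{\sim}{\subseteq} k_A \overset{\sim}{\subseteq} cl(h_A)$.

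The first inclusion is immediate: from the hypothesis $g_A \overset{\sim}{\subseteq} k_A$ together with $h_A \overset{\sim}{\subseteq} g_A$ we get $h_A \overset{\sim}{\subseteq} k_A$ by transitivity of $\overset{\sim}{\subseteq}$. For the second inclusion, I would use monotonicity of the fuzzy soft closure operator: applying $cl$ to $h_A \overset{\sim}{\subseteq} g_A$ gives $cl(h_A) \overset{\sim}{\subseteq} cl(g_A)$. Now the hypothesis $k_A \overset{\sim}{\subseteq} cl(g_A)$ combined with this would give $k_A \overset{\sim}{\subseteq} cl(g_A)$, but I actually need $k_A \overset{\sim}{\subseteq} cl(h_A)$, which is the reverse direction — so a little more care is needed. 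The right move is instead to apply $cl$ to the inclusion $g_A \overset{\sim}{\subseteq} cl(h_A)$: by monotonicity $cl(g_A) \overset{\sim}{\subseteq} cl(cl(h_A)) = cl(h_A)$, using idempotence of closure. Chaining $k_A \overset{\sim}{\subseteq} cl(g_A) \overset{\sim}{\subseteq} cl(h_A)$ then yields exactly what is wanted.

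So the two key steps are: (1) transitivity to get $h_A \overset{\sim}{\subseteq} k_A$, and (2) monotonicity plus idempotence of $cl$ to get $k_A \overset{\sim}{\subseteq} cl(g_A) \overset{\sim}{\subseteq} cl(h_A)$. Having produced the open fuzzy soft set $h_A$ sandwiched as $h_A \overset{\sim}{\subseteq} k_A \overset{\sim}{\subseteq} cl(h_A)$, the definition of semiopen fuzzy soft set applies directly and $k_A$ is semiopen.

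I do not expect any real obstacle here; the only thing to be slightly careful about is not mixing up the direction of the closure inclusions, since the naive first attempt ($cl$ applied to $h_A \overset{\sim}{\subseteq} g_A$) points the wrong way and one must instead close up the inclusion $g_A \overset{\sim}{\subseteq} cl(h_A)$ and invoke idempotence $cl(cl(h_A)) = cl(h_A)$. Both monotonicity and idempotence of the fuzzy soft closure are standard and may be taken as known properties of the closure operator in a fuzzy soft topological space.
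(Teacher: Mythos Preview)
Your proposal is correct and follows essentially the same argument as the paper: both take the open witness $h_A$ for $g_A$, obtain $h_A \overset{\sim}{\subseteq} k_A$ by transitivity, and get $k_A \overset{\sim}{\subseteq} cl(g_A) \overset{\sim}{\subseteq} cl(h_A)$ from $g_A \overset{\sim}{\subseteq} cl(h_A)$ via monotonicity and idempotence of closure. Your write-up is in fact a bit more explicit than the paper's, which simply asserts $cl(g_A) \overset{\sim}{\subseteq} cl(h_A)$ without spelling out the idempotence step.
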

\begin{proof}
As $g_A$ is semiopen fuzzy soft set $\exists$ an open fuzzy soft set $h_A$ such that $h_A \overset{\sim }{\subseteq} g_A \overset{\sim }{\subseteq} cl(h_A)$; then by hypothesis $h_A \overset{\sim }{\subseteq}  k_A$ and $cl(g_A) \overset{\sim }{\subseteq} cl(h_A) \Rightarrow k_A \overset{\sim }{\subseteq} cl(g_A) \overset{\sim }{\subseteq} cl(h_A) $ i.e., $h_A \overset{\sim }{\subseteq} k_A \overset{\sim }{\subseteq} cl(h_A)$, hence $k_A$ is a semiopen fuzzy soft set.
\end{proof}

\begin{rem}
It is not true that the intersection (union) of any two semiopen (semiclosed)  fuzzy soft sets need not be a semiopen (semiclosed) fuzzy soft set. Even the intersection (union) of a semiopen (semiclosed) fuzzy soft set with a fuzzy soft open (closed) set may fail to be a semiopen (semiclosed) fuzzy  soft set. It should be noted that in general topological space the intersection of a semiopen set with an open set is a semiopen set \cite{TN} but it doesn't hold in fuzzy setting \cite{AZ}. Further it should be noted that the closure of a fuzzy open set, is a fuzzy semiopen set and the interior of a fuzzy closed set is a fuzzy semiclosed set. 
\end{rem}

\begin{thm}
If a semiclosed fuzzy soft set $m_A$ is such that $int(m_A) \overset{\sim }{\subseteq} k_A \overset{\sim }{\subseteq} m_A $, then $k_A$ is also semiclosed.
\end{thm}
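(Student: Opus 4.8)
The plan is to dualise the argument just used for semiopen fuzzy soft sets. First I would unpack the hypothesis that $m_A$ is semiclosed: by definition there exists a fuzzy soft closed set $c_A$ with $int(c_A) \overset{\sim}{\subseteq} m_A \overset{\sim}{\subseteq} c_A$. The goal is then to show that this very same $c_A$ also witnesses the semiclosedness of $k_A$, that is, to establish $int(c_A) \overset{\sim}{\subseteq} k_A \overset{\sim}{\subseteq} c_A$.

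For the right‑hand inclusion I would simply chain the given $k_A \overset{\sim}{\subseteq} m_A$ with $m_A \overset{\sim}{\subseteq} c_A$ to get $k_A \overset{\sim}{\subseteq} c_A$. For the left‑hand inclusion the key observation is monotonicity of the fuzzy soft interior together with the fact that $int(c_A)$ is itself a fuzzy soft open set: from $int(c_A) \overset{\sim}{\subseteq} m_A$ and the fact that $int(m_A)$ is the largest fuzzy soft open set contained in $m_A$, we obtain $int(c_A) \overset{\sim}{\subseteq} int(m_A)$. Combining this with the hypothesis $int(m_A) \overset{\sim}{\subseteq} k_A$ yields $int(c_A) \overset{\sim}{\subseteq} k_A$.

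Putting the two inclusions together gives $int(c_A) \overset{\sim}{\subseteq} k_A \overset{\sim}{\subseteq} c_A$ with $c_A$ fuzzy soft closed, which is exactly the definition of $k_A$ being semiclosed, completing the proof. The argument is essentially routine and mirrors the semiopen case; the only point needing a touch of care is the step $int(c_A) \overset{\sim}{\subseteq} int(m_A)$, which rests on the standard characterisation of the fuzzy soft interior as the union of all fuzzy soft open subsets (equivalently, its monotonicity). No real obstacle is anticipated beyond keeping the auxiliary closed set $c_A$ notationally distinct from the set $k_A$ appearing in the statement.
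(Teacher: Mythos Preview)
Your argument is correct; it is precisely the dual of the paper's proof of the preceding semiopen result, and in fact the paper itself omits any proof of this statement, leaving it to the reader as the evident dualisation. The only step requiring comment, $int(c_A) \overset{\sim}{\subseteq} int(m_A)$, is exactly the dual of the paper's step $cl(g_A) \overset{\sim}{\subseteq} cl(h_A)$ and is justified for the reason you give.
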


Following two theorems characterize semiopen and semiclosed fuzzy soft sets.
\begin{thm}
A fuzzy soft set $g_A \in SOFSS(f_E)  \Leftrightarrow$ for every fuzzy soft point $e_{g_{\scriptscriptstyle A}} \overset{\sim}{\in} g_A, \exists$ a fuzzy soft set $h_A \in SOFSS(f_E)$ such that $e_{g_{\scriptscriptstyle A}} \overset{\sim}{\in} h_A \overset{\sim }{\subseteq}g_A$.
\end{thm}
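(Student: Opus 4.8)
The plan is to prove the two implications separately, with the non‑trivial direction resting on the fact that a fuzzy soft set coincides with the union of all the fuzzy soft points it contains, together with the already‑established closure of $SOFSS(f_E)$ under arbitrary unions.

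For the forward implication, suppose $g_A \in SOFSS(f_E)$. Given any fuzzy soft point $e_{g_{\scriptscriptstyle A}} \overset{\sim}{\in} g_A$, it suffices to take $h_A = g_A$: then $h_A \in SOFSS(f_E)$ and trivially $e_{g_{\scriptscriptstyle A}} \overset{\sim}{\in} h_A \overset{\sim}{\subseteq} g_A$. So this direction is immediate.

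For the converse, assume that for each fuzzy soft point $e_{g_{\scriptscriptstyle A}} \overset{\sim}{\in} g_A$ there is a semiopen fuzzy soft set $h_A^{e}$ with $e_{g_{\scriptscriptstyle A}} \overset{\sim}{\in} h_A^{e} \overset{\sim}{\subseteq} g_A$. First I would verify the pointwise representation: for each $e \in A$ with $g(e) \neq \overset{\sim}{\Phi}$, the fuzzy soft set that equals $g(e)$ at $e$ and $\overset{\sim}{\Phi}$ elsewhere is a fuzzy soft point belonging to $g_A$, and the union of these over $e \in A$ gives back exactly $g_A$ at every coordinate; coordinates with $g(e) = \overset{\sim}{\Phi}$ contribute nothing and may be omitted. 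Consequently
\[
g_A = \overset{\sim}{\bigcup}\{\, e_{g_{\scriptscriptstyle A}} : e_{g_{\scriptscriptstyle A}} \overset{\sim}{\in} g_A \,\} \overset{\sim}{\subseteq} \overset{\sim}{\bigcup}\{\, h_A^{e} : e_{g_{\scriptscriptstyle A}} \overset{\sim}{\in} g_A \,\} \overset{\sim}{\subseteq} g_A,
\]
where the last inclusion holds because each $h_A^{e} \overset{\sim}{\subseteq} g_A$. Hence $g_A$ is an arbitrary union of semiopen fuzzy soft sets, so by the theorem on arbitrary unions of semiopen fuzzy soft sets, $g_A \in SOFSS(f_E)$.

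The only genuine subtlety — and the step I would be most careful about — is the representation of $g_A$ as the union of the fuzzy soft points lying in it: one must check that membership $e_{g_{\scriptscriptstyle A}} \overset{\sim}{\in} g_A$ (which only constrains the single coordinate $e$) together with the freedom to let $e$ range over all of $A$ suffices to recover $g_A$ on the nose. Everything else is formal.
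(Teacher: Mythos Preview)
Your proof is correct and follows exactly the same approach as the paper: the forward direction takes $h_A = g_A$, and the converse expresses $g_A$ as the union of its fuzzy soft points, sandwiches this union between $g_A$ and the union of the chosen $h_A$'s, and then invokes closure of $SOFSS(f_E)$ under arbitrary unions. Your write-up is in fact more detailed than the paper's, which leaves the final appeal to the arbitrary-union theorem implicit.
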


\begin{proof}

Take $h_A = g_A$, this shows that the condition is necessary.

For sufficiency, we have $g_A= \overset{\sim}{\underset{e_{g_{\scriptscriptstyle A}} \overset{\sim}{\in} g_A}{\bigcup}}(e_{g_{\scriptscriptstyle A}}) \overset{\sim}{\subseteq} \overset{\sim}{\underset{e_{g_{\scriptscriptstyle A}} \overset{\sim}{\in} g_A}{\bigcup}}h_A \overset{\sim}{\subseteq} g_A$.
\end{proof}

\begin{thm}
If $g_A$ is any fuzzy soft set in a fuzzy soft topological space $(f_E, \tau)$ then following are equivalent: 
\begin{enumerate}
\item $g_A$ is semiclosed fuzzy soft set;
\item $int(cl(g_A)) \overset{\sim }{\subseteq} g_A$;
\item $cl(int(g_A^c)) \overset{\sim }{\supseteq} g_A^c$.
\item $g_A^c$ is semiopen fuzzy soft set;

\end{enumerate}
\end{thm}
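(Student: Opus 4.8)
The plan is to establish the cycle of implications $(1)\Rightarrow(2)\Rightarrow(3)\Rightarrow(4)\Rightarrow(1)$, using the standard interplay between interior, closure, and complementation in a fuzzy soft topological space, namely $(\mathrm{int}(k_A))^c = \mathrm{cl}(k_A^c)$ and $(\mathrm{cl}(k_A))^c = \mathrm{int}(k_A^c)$, together with monotonicity of $\mathrm{int}$ and $\mathrm{cl}$ and the inclusions $\mathrm{int}(k_A)\,\widetilde{\subseteq}\,k_A\,\widetilde{\subseteq}\,\mathrm{cl}(k_A)$. These are the fuzzy soft analogues of the familiar topological identities and I would assume them as known background facts about fuzzy soft topologies.

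For $(1)\Rightarrow(2)$: if $g_A$ is semiclosed, pick a closed fuzzy soft set $k_A$ with $\mathrm{int}(k_A)\,\widetilde{\subseteq}\,g_A\,\widetilde{\subseteq}\,k_A$. Applying $\mathrm{cl}$ to $g_A\,\widetilde{\subseteq}\,k_A$ and using $\mathrm{cl}(k_A)=k_A$ (since $k_A$ is closed) gives $\mathrm{cl}(g_A)\,\widetilde{\subseteq}\,k_A$; then applying $\mathrm{int}$ and using $\mathrm{int}(k_A)\,\widetilde{\subseteq}\,g_A$ yields $\mathrm{int}(\mathrm{cl}(g_A))\,\widetilde{\subseteq}\,\mathrm{int}(k_A)\,\widetilde{\subseteq}\,g_A$. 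For $(2)\Leftrightarrow(3)$: this is a pure complementation step — take complements in $\mathrm{int}(\mathrm{cl}(g_A))\,\widetilde{\subseteq}\,g_A$, turning the left side into $\mathrm{cl}(\mathrm{int}(g_A^c))$ via the two identities above, so the inclusion reverses to $g_A^c\,\widetilde{\subseteq}\,\mathrm{cl}(\mathrm{int}(g_A^c))$, and the reverse direction is identical.

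For $(3)\Rightarrow(4)$: set $h_A := \mathrm{int}(g_A^c)$, which is open, and $\ell_A := \mathrm{cl}(h_A) = \mathrm{cl}(\mathrm{int}(g_A^c))$. We always have $h_A\,\widetilde{\subseteq}\,g_A^c$ since $\mathrm{int}(g_A^c)\,\widetilde{\subseteq}\,g_A^c$, and hypothesis $(3)$ gives $g_A^c\,\widetilde{\subseteq}\,\mathrm{cl}(h_A)$; thus $h_A\,\widetilde{\subseteq}\,g_A^c\,\widetilde{\subseteq}\,\mathrm{cl}(h_A)$ with $h_A$ open, which is exactly the definition of $g_A^c$ being semiopen. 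Finally $(4)\Rightarrow(1)$: if $g_A^c$ is semiopen, there is an open $h_A$ with $h_A\,\widetilde{\subseteq}\,g_A^c\,\widetilde{\subseteq}\,\mathrm{cl}(h_A)$; complementing gives $\mathrm{int}(h_A^c)\,\widetilde{\subseteq}\,g_A\,\widetilde{\subseteq}\,h_A^c$ (using $(\mathrm{cl}(h_A))^c = \mathrm{int}(h_A^c)$), and $h_A^c$ is closed, so $g_A$ is semiclosed with witness $k_A := h_A^c$.

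I do not anticipate a genuine obstacle here; the only point requiring slight care is making sure the complementation identities $(\mathrm{int}\,k_A)^c = \mathrm{cl}(k_A^c)$ and $(\mathrm{cl}\,k_A)^c = \mathrm{int}(k_A^c)$ are available in the fuzzy soft setting with the paper's conventions for $\widetilde{\subseteq}$ and for complements (in particular that $A\,\widetilde{\subseteq}\,B$ iff $B^c\,\widetilde{\subseteq}\,A^c$, and that $(g_A^c)^c = g_A$). Given those, every implication is a one- or two-line manipulation, and closing the loop at $(4)\Rightarrow(1)$ is the natural symmetric mirror of $(1)\Rightarrow(2)$ together with $(3)\Rightarrow(4)$, so no separate argument is needed beyond what is sketched.
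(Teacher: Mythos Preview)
Your proposal is correct and follows essentially the same cycle $(1)\Rightarrow(2)\Rightarrow(3)\Rightarrow(4)\Rightarrow(1)$ as the paper, with the same witnesses ($h_A=\mathrm{int}(g_A^c)$ for $(3)\Rightarrow(4)$ and $k_A=h_A^c$ for $(4)\Rightarrow(1)$) and the same use of the complementation identities. The only cosmetic difference is that you spell out $\mathrm{cl}(k_A)=k_A$ explicitly in $(1)\Rightarrow(2)$ and note the reversibility of $(2)\Leftrightarrow(3)$, whereas the paper proves only the forward direction needed for the cycle.
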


\begin{proof}

$(i) \Rightarrow (ii)$ If $g_A$ is semiclosed fuzzy soft set, then $\exists$ closed fuzzy soft set $h_A$ such that $int(h_A) \overset{\sim }{\subseteq} g_A \overset{\sim }{\subseteq} h_A \Rightarrow int(h_A) \overset{\sim }{\subseteq} g_A \overset{\sim }{\subseteq} cl(g_A) \overset{\sim }{\subseteq}h_A$. By the property of interior we then have $int(cl(g_A)) \overset{\sim }{\subseteq} int(h_A) \overset{\sim }{\subseteq} g_A$; 

$(ii) \Rightarrow (iii) int(cl(g_A)) \overset{\sim }{\subseteq} g_A \Rightarrow g_A^c \overset{\sim }{\subseteq} (int(cl(g_A)))^c = cl(int(g_A^c)) \overset{\sim}{\supseteq} g_A^c$.

$(iii) \Rightarrow (iv)$ $h_A = int(g_A^c)$ is an open fuzzy soft set such that $int(g_A^c) \overset{\sim }{\subseteq} g_A^c 
\overset{\sim }{\subseteq} cl(int(g_A^c)) $, hence $g_A^c$ is semiopen.

$(iv) \Rightarrow (i)$ As $g_A^c$ is semiopen $\exists$ an open fuzzy soft set $h_A$ such that $h_A \overset{\sim }{\subseteq} g_A^c \overset{\sim }{\subseteq} cl(h_A) \Rightarrow h_A^c$ is a closed fuzzy soft set such that $g_A \overset{\sim }{\subseteq} h_A^c$ and  $g_A^c \overset{\sim }{\subseteq} cl(h_A) \Rightarrow int(h_A^c) \overset{\sim }{\subseteq} g_A$, hence $g_A$ is semiclosed fuzzy soft set. 
\end{proof}

\begin{defn}
Let $(f_E, \tau)$ be a fuzzy soft topological space and $g_A$ be a fuzzy soft set over $U$.
\begin{enumerate}
\item The fuzzy soft semi closure of $g_A$ is a fuzzy soft set $fssclg_A= \overset{\sim }{\bigcap} \{s_A~|~ g_A \overset{\sim }{\subseteq} s_A$ and $ s_A \in SCFSS(f_E) \}$;
\item The fuzzy soft semi interior of $g_A$ is a fuzzy soft set $fssintg_A= \overset{\sim }{\bigcup} \{s_A~|~ s_A \overset{\sim }{\subseteq} g_A$ and $ s_A \in SOFSS(f_E)\}$.
\end{enumerate}
\end{defn}

$fssclg_A$ is the smallest semiclosed fuzzy soft set containing $g_A$ and $fssintg_A$ is the largest semiopen fuzzy soft set contained in $g_A$.

\begin{thm}
Let $(f_E, \tau)$ be a fuzzy soft topological space and $g_A$ and $k_A$ be two fuzzy soft sets over $U$, then 
\begin{enumerate}
\item $ g_A \in SCFSS(f_E)  \Leftrightarrow g_A= fssclg_A$;
\item $ g_A \in SOFSS(f_E) \Leftrightarrow g_A= fssintg_A$;
\item $(fssclg_A)^c = fssint(g_A^c)$;
\item $(fssintg_A)^c = fsscl(g_A^c)$;
\item $g_A \overset{\sim }{\subseteq} k_A \Rightarrow fssintg_A \overset{\sim }{\subseteq} fssintk_A$;
\item $g_A \overset{\sim }{\subseteq} k_A \Rightarrow fssclg_A \overset{\sim }{\subseteq} fssclk_A$;
\item $fsscl \overset{\sim}{\Phi}_E = \overset{\sim}{\Phi}_E$ and $fsscl f_E = f _E$;
\item $fssint \overset{\sim}{\Phi}_E = \overset{\sim}{\Phi}_E$ and $fssint f_E = f _E$;
\item $fsscl(g_A \overset{\sim }{\cup} k_A) = fssclg_A \overset{\sim }{\cup}  fssclk_A$;
\item $fssint(g_A \overset{\sim }{\cap} k_A) = fssintg_A \overset{\sim }{\cap}  fssintk_A$;
\item $fsscl(g_A \overset{\sim }{\cap} k_A) \overset{\sim }{\subset} fssclg_A \overset{\sim }{\cap}  fssclk_A$;
\item $fssint(g_A \overset{\sim }{\cup} k_A) \overset{\sim }{\subset} fssintg_A \overset{\sim }{\cup}  fssintk_A$;
\item $fsscl(fssclg_A)=fssclg_A$;
\item $fssint(fssintg_A)=fssintg_A$.
\end{enumerate}
\end{thm}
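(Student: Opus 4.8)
The plan is to bootstrap the whole list from three facts already in hand: (a) $fssclg_A$ is itself semiclosed and $fssintg_A$ is itself semiopen, the former being an arbitrary intersection of semiclosed fuzzy soft sets (Remark 2.7) and the latter an arbitrary union of semiopen fuzzy soft sets (Theorem 2.6); (b) the soft De Morgan laws $\big(\overset{\sim}{\bigcap}s_A\big)^c = \overset{\sim}{\bigcup}s_A^c$ and $\big(\overset{\sim}{\bigcup}s_A\big)^c = \overset{\sim}{\bigcap}s_A^c$; and (c) Theorem 2.12, by which $s_A \mapsto s_A^c$ is an inclusion-reversing involution interchanging $SCFSS(f_E)$ and $SOFSS(f_E)$.

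First I would prove (i): if $g_A$ is semiclosed then it is one of the sets over which the intersection defining $fssclg_A$ is formed, so $fssclg_A \overset{\sim}{\subseteq} g_A$, while $g_A \overset{\sim}{\subseteq} fssclg_A$ holds always since $g_A$ sits inside every such set; conversely $g_A = fssclg_A$ is semiclosed by (a). Item (ii) is the order-dual. Then (vii) and (viii) drop out of (i), (ii) and the fact that $\overset{\sim}{\Phi}_E$ and $f_E$ are both semiclosed and semiopen (Remark 2.4); and (xiii), (xiv) follow by applying (i), (ii) to $fssclg_A$, $fssintg_A$, which are semiclosed and semiopen respectively by (a).

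Next the duality pair: complementing the intersection defining $fssclg_A$ and using (b) rewrites it as the union of the $s_A^c$ over semiclosed supersets $s_A$ of $g_A$; by (c) these $s_A^c$ are precisely the semiopen fuzzy soft sets contained in $g_A^c$, so the union is $fssint(g_A^c)$, which is (iii), and (iv) comes from (iii) applied to $g_A^c$ followed by complementation. The monotonicity statements are immediate: $g_A \overset{\sim}{\subseteq} k_A$ makes the family defining $fssintg_A$ a sub-family of the one defining $fssintk_A$, giving (v), and (vi) is dual (or follows from (v) via (iii), (iv)). Pure monotonicity also gives (xi) and (xii): from $g_A \overset{\sim}{\cap} k_A \overset{\sim}{\subseteq} g_A$ and $g_A \overset{\sim}{\cap} k_A \overset{\sim}{\subseteq} k_A$, item (vi) yields $fsscl(g_A \overset{\sim}{\cap} k_A) \overset{\sim}{\subseteq} fssclg_A \overset{\sim}{\cap} fssclk_A$ (this is (xi)), and symmetrically $fssintg_A \overset{\sim}{\cup} fssintk_A \overset{\sim}{\subseteq} fssint(g_A \overset{\sim}{\cup} k_A)$, which is the inclusion intended by (xii); and the \emph{easy} halves of (ix) and (x), namely $fssclg_A \overset{\sim}{\cup} fssclk_A \overset{\sim}{\subseteq} fsscl(g_A \overset{\sim}{\cup} k_A)$ and $fssint(g_A \overset{\sim}{\cap} k_A) \overset{\sim}{\subseteq} fssintg_A \overset{\sim}{\cap} fssintk_A$, come the same way.

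The remaining content — the reverse inclusions in (ix) and (x) — is the main obstacle. A natural approach is to first establish the closed forms $fssclg_A = g_A \overset{\sim}{\cup} int(cl(g_A))$ and $fssintg_A = g_A \overset{\sim}{\cap} cl(int(g_A))$: the right-hand sides are semiclosed, resp.\ semiopen, by Theorem 2.12(ii) together with the identity $cl\big(g_A \overset{\sim}{\cup} int(cl(g_A))\big) = cl(g_A)$, and each is visibly the smallest such set containing, resp.\ largest contained in, $g_A$. Substituting into (ix) reduces its reverse inclusion to $int\big(cl(g_A) \overset{\sim}{\cup} cl(k_A)\big) \overset{\sim}{\subseteq} int(cl(g_A)) \overset{\sim}{\cup} int(cl(k_A))$ — distributivity of the fuzzy soft interior over a union — and (x) reduces, dually, to distributivity of closure over the union of two interiors. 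This is exactly the step that cannot be taken for granted: an interior operator need not distribute over unions, and, in line with Remark 2.9, $fssclg_A \overset{\sim}{\cup} fssclk_A$ need not even be semiclosed. Accordingly I expect (ix) and (x) to be where a proof either needs an extra hypothesis or must be argued with much more care than the structural reasoning that disposes of the other twelve parts.
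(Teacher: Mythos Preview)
For items (i)--(viii) and (xi)--(xiv) your outline is essentially the paper's proof: the paper likewise gets (i) from $fssclg_A$ being the smallest semiclosed superset, obtains (iii) by taking complements and invoking De Morgan, reads (v)--(vi) off the definition, deduces (vii)--(viii) from Remark~2.4, derives (xi)--(xii) by pure monotonicity, and pulls (xiii)--(xiv) from (i)--(ii) applied to $fssclg_A$ and $fssintg_A$.

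The divergence is exactly at (ix) and (x), and here the paper does precisely what you flagged as suspect. After the monotonicity inclusion $fssclg_A \overset{\sim}{\cup} fssclk_A \overset{\sim}{\subseteq} fsscl(g_A \overset{\sim}{\cup} k_A)$, the paper simply asserts that $fssclg_A, fssclk_A \in SCFSS(f_E) \Rightarrow fssclg_A \overset{\sim}{\cup} fssclk_A \in SCFSS(f_E)$, and then concludes the reverse inclusion because $fsscl(g_A \overset{\sim}{\cup} k_A)$ is the \emph{smallest} semiclosed set containing $g_A \overset{\sim}{\cup} k_A$; item (x) is declared ``similar''. Your caution is well placed: this closure-under-finite-union step is exactly what Remark~2.9 says can fail, so the paper's own remark is in tension with its proof of (ix). Your alternative route via the formula $fssclg_A = g_A \overset{\sim}{\cup} int(cl(g_A))$ runs into the same obstruction dressed as distributivity of interior over a union. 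You did not miss an idea the paper supplies; the paper glosses over the very point you isolated.
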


\begin{proof} Let $g_A$ and $k_A$ be two fuzzy soft sets over $U$.
\begin{enumerate}
\item Let $g_A$ be a semiclosed fuzzy soft set. Then it is the smallest semiclosed set containing itself and hence\\ $g_A= fssclg_A$. 

On the other hand, let $g_A= fssclg_A$ and $fssclg_A \in SCFSS(f_E) \Rightarrow g_A \in SCFSS(f_E)$.
\item Similar to $(i)$.
\item \begin{eqnarray*}
(fssclg_A)^c & = & (\overset{\sim }{\bigcap} \{s_A~|~ g_A \overset{\sim }{\subseteq} s_A and  s_A \in SCFSS(f_E) \})^c \\
&=& \overset{\sim }{\bigcup} \{s_A^c~|~ g_A \overset{\sim }{\subseteq} s_A and  s_A \in SCFSS(f_E) \} \\
&= &\overset{\sim }{\bigcup} \{s_A^c~|~ s_A^c \overset{\sim }{\subseteq} g_A^c and  s_A^c \in SOFSS(f_E)\} \\ 
&=& fssint(g_A^c).
\end{eqnarray*}
\item Similar to $(iii)$.
\item Follows from definiton.
\item Follows from definition.
\item Since $\overset{\sim}{\Phi}_E $ and $f_E$ are semiclosed fuzzy soft sets so $fsscl \overset{\sim}{\Phi}_E  = \overset{\sim}{\Phi}_E $ and $fsscl f_E = f _E$.  
\item Since $\overset{\sim}{\Phi}_E $ and $f_E$ are semiopen fuzzy soft sets so $fssint \overset{\sim}{\Phi}_E  = \overset{\sim}{\Phi}_E $ and $fssint f _E = f _E$. 
\item We have $g_A \overset{\sim }{\subset} g_A \overset{\sim }{\bigcup} k_A$ and  $k_A \overset{\sim }{\subset} g_A \overset{\sim }{\bigcup} k_A$. Then by $(vi), fssclg_A \overset{\sim }{\subset} fsscl(g_A \overset{\sim }{\bigcup} k_A)$ and $fssclk_A \overset{\sim }{\subset} fsscl(g_A \overset{\sim }{\bigcup} k_A) \Rightarrow fssclk_A\overset{\sim }{\bigcup} fssclg_A \overset{\sim }{\subset} fsscl(g_A \overset{\sim }{\bigcup} k_A)$. 

Now, $fssclg_A, fssclk_A \in SCFSS(f_E) \Rightarrow fssclg_A \overset{\sim }{\bigcup} fssclk_A \in SCFSS(f_E)$.

Then $g_A \overset{\sim }{\subset} fssclg_A$ and $k_A \overset{\sim }{\subset} fssclk_A$ imply $g_A\overset{\sim }{\bigcup}k_A \overset{\sim }{\subset} fssclg_A \overset{\sim }{\bigcup}fssclk_A$.i.e., $fssclg_A \overset{\sim }{\bigcup}fssclk_A$ is a semiclosed set containing $g_A\overset{\sim }{\bigcup}k_A$. But $fsscl(g_A \overset{\sim }{\bigcup}k_A)$ is the smallest semiclosed fuzzy soft set containing $g_A\overset{\sim }{\bigcup}k_A$. Hence $fsscl(g_A \overset{\sim }{\bigcup}k_A) \overset{\sim }{\subset} fssclg_A \overset{\sim }{\bigcup}fssclk_A$. 
 So, $fsscl(g_A \overset{\sim }{\cup} k_A) = fssclg_A \overset{\sim }{\cup}  fssclk_A$.
\item Similar to $(ix)$.
\item We have $g_A \overset{\sim }{\bigcap} k_A \overset{\sim }{\subset} g_A$ and $g_A \overset{\sim }{\bigcap} k_A \overset{\sim }{\subset} k_A\\ \Rightarrow fsscl(g_A \overset{\sim }{\bigcap} k_A) \overset{\sim }{\subset} fssclg_A$ and $fsscl(g_A \overset{\sim }{\bigcap} k_A) \overset{\sim }{\subset} fssclk_A \\ \Rightarrow fsscl(g_A \overset{\sim }{\bigcap} k_A) \overset{\sim }{\subset} fssclg_A\overset{\sim }{\bigcap} fssclk_A$. 
\item Similar to $(xi)$.
\item Since $fssclg_A \in SCSS(U)$ so by $(i), fsscl(fssclg_A)=fssclg_A$.
\item Since $fssintg_A \in SOSS(U)$ so by $(ii), fssint(fssintg_A)=fssintg_A$.
\end{enumerate}
\end{proof}

\begin{rem}
If $g_A$ is semiopen fuzzy soft (semiclosed fuzzy soft) set, then int($g_A$), fssint($g_A$) (fsscl($g_A$) and cl($g_A$)) are semiopen fuzzy soft
(semiclosed fuzzy soft) set.
\end{rem}

\section{Conclusion}
In this work, we have initiated the generalization of closed and open sets in a fuzzy soft topological space as semiopen and semiclosed fuzzy soft sets. We have also discussed some characterizations of these sets. Further the topological structures namely interior and closure are also generalized and several interesting properties are studied. Several remarks are stated which give comparison between the properties of these sets in three different domains, namely general topology, fuzzy topology and fuzzy soft topology. Surely the discussions in this paper will help researchers to enhance and promote the study on fuzzy soft topology for its applications in practical life.

\section{References}


\begin{thebibliography}{99}
\bibitem{AR}
A. Rosenfeld, Digital topology, The American Mathematical Monthly, 86(8), (1979) 621–630, 1979.
\bibitem{S5}
B. Tanay, M. Burc Kandemir, Topological structure of fuzzy soft sets, Comp. Math. Appl., 61, (2011), 2952--2957.

\bibitem{CA}
C.L. Chang, Fuzzy topological spaces, J. Math. Anal. Appl., 24, (1968), 182--190.

\bibitem{S1}
D. Molodtsov, Soft Set Theory-First Results, Comp. Math. Appl. 37, (1999) 19-31.

\bibitem{PKD} 
J. Mahanta, P.K. Das, On soft topological space via semiopen and semiclosed soft sets, arXiv:1203.4133v1 [math.GN] 16 Mar 2012.
\bibitem{JM} 
J. Mahanta, P.K. Das, Results on fuzzy soft topological spaces, arXiv:1203.0634v1 [cs.IT] 3 Mar 2012.
\bibitem{AZ} 
K. K. Azad, On fuzzy semicontinuity, fuzzy almost continuity and fuzzy weakly continuity, Journal of Mathematical Analysis and Applications,82(1), (1981) 14–32.
\bibitem{S} 
L.A. Zadeh, Fuzzy Sets, Information and Control, 11, (1965), 341--356.

\bibitem{S2}
M. Shabir, M. Naz, On fuzzy soft topological spaces, Comp. Math. Appl., 61, (2011) 1786-1799.

\bibitem{S4}
N. Cagman, S. Karatas, S, Enginoglu, Soft Topology, Comp. Math. Appl., 62, (2011) 351-358. 

\bibitem{NL}
N. Levine, "Semi-open sets and semi-continuity in topological spaces," The American Mathematical Monthly, 70, (1963) 36–41.

\bibitem{S7}
S. Roy, T.K. Samanta, A note on fuzzy soft topological spaces, Ann. Fuzzy Math. Inform., 3(2), (2012), 305--311.
\bibitem{TN}
T. Noiri, On semi-continuous mapping, Atti Accud. Naz. Lincei Rend, Cl. Sci. Fis. Mat. Nutur (8) 54 (1973), 132-136.
\end{thebibliography}
\end{document}